\documentclass[11pt]{article}

\usepackage{amsmath, amssymb, amsthm}
\usepackage{mathtools}
\usepackage{enumitem}
\usepackage{hyperref}

\newtheorem{theorem}{Theorem}
\newtheorem{lemma}{Lemma}
\newtheorem{proposition}{Proposition}

\theoremstyle{definition}
\newtheorem{definition}{Definition}

\newtheorem{assumption}{Assumption}

\title{Hypothesis Testing over Observable Regimes in Singular Models}
\author{Sean Plummer\\ snplmmr@gmail.com}
\date{February 18, 2026}

\begin{document}
\maketitle

\begin{abstract}
Hypothesis testing in singular statistical models is often regarded as inherently problematic due to non-identifiability and degeneracy of the Fisher information. We show that the fundamental obstruction to testing in such models is not singularity itself, but the formulation of hypotheses on non-identifiable parameter quantities. Testing is inherently a problem in distribution space: if two hypotheses induce overlapping subsets of the model class, then no uniformly consistent test exists. We formalize this overlap obstruction and show that hypotheses depending on non-identifiable parameter functions necessarily fail in this sense. In contrast, hypotheses formulated over identifiable observables—quantities that are determined by the induced distribution—reduce entirely to classical testing theory. When the corresponding distributional regimes are separated in Hellinger distance, uniformly consistent tests exist and posterior contraction follows from standard testing-based arguments. Near singular boundaries, separation may collapse locally, leading to scale-dependent detectability governed jointly by sample size and distance to the singular stratum. We illustrate these phenomena in Gaussian mixture models and reduced-rank regression, exhibiting both untestable non-identifiable hypotheses and classically testable identifiable ones. The results provide a structural classification of which hypotheses in singular models are statistically meaningful.
\end{abstract}

\section{Introduction}
\label{sec:intro}

Hypothesis testing in parametric models is typically formulated as a question
about parameters: does $\theta$ belong to a null set $\Theta_0$ or an
alternative set $\Theta_1$?  In regular models, this formulation is
unproblematic.  Distinct parameter values induce distinct
data-generating distributions, the Fisher information is non-degenerate,
and classical asymptotic arguments provide conditions under which tests
with vanishing Type~I and Type~II errors exist (see, e.g., \cite{vdVaart1998}).

Many modern statistical models, however, are singular.  In singular models,
the parameterization may be non-identifiable, the Fisher information matrix
may be degenerate, and lower-dimensional strata may correspond to structurally
simpler representations.  Finite mixture models, reduced-rank regression,
neural networks, and latent variable models all exhibit such behavior (see, e.g., \cite{Watanabe2009,Drton2009}).
In these settings, distinct parameter values can induce the same
distribution.

This distinction is fundamental because testing is inherently a problem in
distribution space.  A test observes data only through its law.  If two
parameter values on opposite sides of a hypothesis generate the same
distribution, then no statistical procedure can distinguish them.
More generally, if a parameter-level hypothesis induces two subsets
$\mathcal{M}_0, \mathcal{M}_1 \subset \mathcal{M}$ of the model class
whose intersection is nonempty, then uniform separation is impossible.
The obstruction is therefore conceptual rather than technical:
the hypotheses do not correspond to distinct distributional regimes.

The central idea of this paper is to reformulate hypothesis testing in
singular models so that it respects identifiability.
Let $\Phi:\mathcal{W}\to\mathcal{M}$ denote a (possibly non-injective)
parameterization, $w \mapsto P_w$.
Two parameter values are observationally equivalent if they induce
the same distribution.
Statistical distinguishability is determined by equivalence classes
under this relation, not by raw parameter coordinates.

We therefore define hypotheses directly on the model class:
\[
H_0: P \in \mathcal{M}_0,
\qquad
H_1: P \in \mathcal{M}_1,
\]
where $\mathcal{M}_0,\mathcal{M}_1 \subset \mathcal{M}$ specify
distinct observable regimes.
A parameter-level hypothesis is statistically meaningful only if the
induced subsets of $\mathcal{M}$ are disjoint.

The results of this paper do not introduce new asymptotic rates or testing procedures. Rather, they provide a structural classification of hypothesis testability in singular models, clarifying which difficulties arise from non-identifiability and which do not. While the impossibility and separation results rely on classical testing theory, their synthesis yields a structural classification of hypothesis testability in singular models that does not appear explicitly in the literature. Existing discussions of singular models often attribute testing difficulties to geometric degeneracy. We show that the decisive criterion is instead whether the hypothesis respects observational equivalence. First, we formalize the overlap obstruction: if
$\mathcal{M}_0 \cap \mathcal{M}_1 \neq \varnothing$,
then no uniformly consistent test exists. Second, we show that hypotheses depending on non-identifiable
parameter quantities necessarily induce such overlap and are therefore
untestable, independently of estimation method or asymptotic regime. Third, we demonstrate that identifiable observables in singular models
fall entirely within classical testing theory: separation in Hellinger
distance yields uniformly consistent tests without modification of
asymptotic arguments. Fourth, we clarify how singular geometry affects finite-sample behavior.
Near singular boundaries, observable regimes may approach one another in
distribution space, leading to scale-dependent detectability consistent
with a local alternatives perspective. Finally, we illustrate these phenomena in Gaussian mixture models and
reduced-rank regression, exhibiting both testable identifiable hypotheses
and untestable non-identifiable ones.

The remainder of the paper develops these ideas formally.
Section~2 introduces observable regimes and identifiability.
Section~3 establishes the impossibility result for overlapping regimes.
Section~4 presents separation-based testability.
Section~5 discusses Bayesian contraction.
Section~6 examines scale dependence near singular boundaries,
and Section~7 provides numerical illustrations.

\section{Setup: Models, Observable Regimes, and Identifiability}
\label{sec:setup}

\subsection{Statistical model and parameterization}
Let $(\mathcal{X},\mathcal{F})$ be a measurable sample space and let
$\mathcal{M} \subset \mathcal{P}(\mathcal{X})$ be a statistical model.
We assume that $\mathcal{M}$ admits a (possibly non-injective)
parameterization
\[
\Phi : \mathcal{W} \to \mathcal{M}, \qquad w \mapsto P_w,
\]
with $\mathcal{W} \subset \mathbb{R}^d$.
The map $\Phi$ need not be injective; distinct parameter values may
induce the same distribution.

\subsection{Observational equivalence}
\label{sec:setup-equivalence}

Two parameter values $w, w' \in \mathcal{W}$ are said to be
\emph{observationally equivalent} if
\[
P_w = P_{w'}.
\]
This defines an equivalence relation on $\mathcal{W}$, which we denote by
$w \sim w'$.

The equivalence classes of $\sim$ consist of all parameter values that
induce the same distribution in $\mathcal{M}$.  Statistical distinguishability is therefore determined at the level of
$\mathcal{M}$ rather than at the level of individual parameter values.

\subsection{Observables}
\label{sec:setup-observables}

\begin{definition}[Observable]
An \emph{observable} is a measurable functional
$T : \mathcal{M} \to \mathbb{R}^k$.
\end{definition}

An observable assigns a numerical quantity to each distribution in the
model class. In particular, it is defined at the level of distributions
and is therefore invariant under changes of parameter representation
that leave the induced distribution unchanged.

A function $f : \mathcal{W} \to \mathbb{R}^k$ represents an observable
if it is constant on observational equivalence classes; that is,
\[
P_w = P_{w'} \Rightarrow f(w) = f(w').
\]

\begin{definition}[Identifiable observable]
A function $f : \mathcal{W} \to \mathbb{R}^k$ is \emph{identifiable}
if there exists a measurable functional
$T : \mathcal{M} \to \mathbb{R}^k$ such that $f(w) = T(P_w)$ for all $w \in \mathcal{W}$.
\end{definition}

An identifiable observable is therefore a parameter-level quantity
fully determined by the induced distribution. Hypotheses depending on
identifiable observables correspond to well-defined subsets of
$\mathcal{M}$, whereas hypotheses depending on non-identifiable
quantities may fail to do so.

\subsection{Hypotheses as observable regimes}
\label{sec:setup-regimes}

We formulate hypothesis testing problems directly on the model class $\mathcal{M}$. A pair of hypotheses is specified by two subsets
\[
H_0 : P \in \mathcal{M}_0,
\qquad
H_1 : P \in \mathcal{M}_1,
\]
where $\mathcal{M}_0, \mathcal{M}_1 \subset \mathcal{M}$. When $T : \mathcal{M} \to \mathbb{R}^k$ is an observable and
$A,B \subset \mathbb{R}^k$ are disjoint sets, we may define
observable regimes by
\[
\mathcal{M}_0 = \{ P \in \mathcal{M} : T(P) \in A \},
\qquad
\mathcal{M}_1 = \{ P \in \mathcal{M} : T(P) \in B \}.
\]
If $f : \mathcal{W} \to \mathbb{R}^k$ is an identifiable observable,
then the induced parameter subsets
\[
\mathcal{W}_0 = \{ w : f(w) \in A \},
\qquad
\mathcal{W}_1 = \{ w : f(w) \in B \}
\]
generate well-defined regimes in $\mathcal{M}$ via $\Phi$.
In contrast, if $f$ is not identifiable, the induced subsets of
$\mathcal{M}$ may overlap.

\section{Impossibility: Overlap and Non-identifiable Hypotheses}
\label{sec:impossibility}

\subsection{Tests and error criteria}
\label{sec:impossibility-tests}

Let $X_1,\dots,X_n$ be independent observations drawn from $P \in \mathcal{M}$. We write $P^n$ for the $n$-fold product measure corresponding to
independent observations from $P$. A (nonrandomized) test is a measurable function $\phi_n : \mathcal{X}^n \to \{0,1\}$. For hypotheses
\[
H_0 : P \in \mathcal{M}_0,
\qquad
H_1 : P \in \mathcal{M}_1,
\]
define the worst-case Type~I and Type~II errors by
\[
\alpha_n(\phi_n)
=
\sup_{P \in \mathcal{M}_0}
P^n(\phi_n = 1),
\qquad
\beta_n(\phi_n)
=
\sup_{Q \in \mathcal{M}_1}
Q^n(\phi_n = 0).
\]

\subsection{Overlap implies impossibility}
\label{sec:overlap}

\begin{lemma}[Overlap implies impossibility of uniform testing]
\label{lem:overlap}
If $\mathcal{M}_0 \cap \mathcal{M}_1 \neq \varnothing$, then for any test
$\phi_n$,
\[
\alpha_n(\phi_n) + \beta_n(\phi_n) \ge 1.
\]
Consequently, no sequence of tests can satisfy
$\alpha_n(\phi_n) \to 0$ and $\beta_n(\phi_n) \to 0$.
\end{lemma}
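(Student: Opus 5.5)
The plan is to exploit a single distribution lying in both hypotheses, which forces the two worst-case errors to be evaluated against the same product measure. Fix any $P^\ast \in \mathcal{M}_0 \cap \mathcal{M}_1$, which exists by assumption, and any test $\phi_n : \mathcal{X}^n \to \{0,1\}$. Since $P^\ast \in \mathcal{M}_0$, the supremum defining $\alpha_n(\phi_n)$ is bounded below by its value at $P^\ast$, giving $\alpha_n(\phi_n) \ge (P^\ast)^n(\phi_n = 1)$. Since also $P^\ast \in \mathcal{M}_1$, the same reasoning applied to the Type~II supremum gives $\beta_n(\phi_n) \ge (P^\ast)^n(\phi_n = 0)$.

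Adding the two bounds, the events $\{\phi_n = 1\}$ and $\{\phi_n = 0\}$ are complementary measurable subsets of $\mathcal{X}^n$, because $\phi_n$ is measurable and $\{0,1\}$-valued. Their $(P^\ast)^n$-probabilities therefore sum to exactly one, and
\[
\alpha_n(\phi_n) + \beta_n(\phi_n) \ge (P^\ast)^n(\phi_n = 1) + (P^\ast)^n(\phi_n = 0) = 1 .
\]
This holds for every $n$ and every test, so it is uniform over the choice of test.

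For the consequence, suppose some sequence $(\phi_n)$ had $\alpha_n(\phi_n) \to 0$ and $\beta_n(\phi_n) \to 0$. Then $\alpha_n(\phi_n) + \beta_n(\phi_n) \to 0$, which contradicts the lower bound $1$ for all $n$. If randomized tests $\phi_n : \mathcal{X}^n \to [0,1]$ were to be covered as well, I would replace the probabilities by $E_{P^n}\phi_n$ and $E_{Q^n}(1-\phi_n)$, and the same identity $E\phi_n + E(1-\phi_n) = 1$ applies. There is no real obstacle here; the only point requiring care is that both suprema are taken over sets containing the common element $P^\ast$, so each can be evaluated at that same distribution.
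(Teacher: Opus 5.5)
Your proof is correct and is essentially identical to the paper's argument: fix $P^\star \in \mathcal{M}_0 \cap \mathcal{M}_1$, bound each worst-case error below by its value at $P^\star$, and use that the events $\{\phi_n = 1\}$ and $\{\phi_n = 0\}$ are complementary. Your extension to randomized tests, via $E_{P^{\star n}}\phi_n + E_{P^{\star n}}(1-\phi_n) = 1$, goes slightly beyond what the paper states, and it is valid.
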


\begin{proof}
Let $P^\star \in \mathcal{M}_0 \cap \mathcal{M}_1$.
Then
\[
\alpha_n(\phi_n)
\ge
P^{\star n}(\phi_n = 1),
\qquad
\beta_n(\phi_n)
\ge
P^{\star n}(\phi_n = 0).
\]
Since
\[
P^{\star n}(\phi_n = 1)
+
P^{\star n}(\phi_n = 0)
=
1,
\]
the result follows.
\end{proof}

\subsection{Non-identifiable parameter hypotheses induce overlap}
\label{sec:nonidentifiable}

\begin{definition}[Non-identifiable quantity]
A function $f : \mathcal{W} \to \mathbb{R}$ is non-identifiable if
there exist $w, w' \in \mathcal{W}$ such that
\[
P_w = P_{w'} 
\quad \text{but} \quad
f(w) \neq f(w').
\]
\end{definition}

\begin{proposition}[Non-identifiable hypotheses are untestable]
\label{prop:nonid}
Let $f : \mathcal{W} \to \mathbb{R}^k$ be non-identifiable.
Suppose there exist disjoint sets $A,B \subset \mathbb{R}^k$ and
parameters $w_0,w_1 \in \mathcal{W}$ such that
\[
f(w_0) \in A, \qquad
f(w_1) \in B, \qquad
P_{w_0} = P_{w_1}.
\]
Define
\[
\mathcal{W}_0 = \{ w : f(w) \in A \},
\qquad
\mathcal{W}_1 = \{ w : f(w) \in B \},
\]
and let
\[
\mathcal{M}_i = \Phi(\mathcal{W}_i), \quad i=0,1.
\]
Then $\mathcal{M}_0 \cap \mathcal{M}_1 \neq \varnothing$.
Consequently, by Lemma~\ref{lem:overlap}, no uniformly consistent test
exists for distinguishing these hypotheses.
\end{proposition}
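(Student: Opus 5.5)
The plan is to exhibit an explicit element of $\mathcal{M}_0 \cap \mathcal{M}_1$ and then invoke Lemma~\ref{lem:overlap}. The natural candidate is the common distribution $P^\star := P_{w_0} = P_{w_1}$ supplied by the hypothesis.

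First, I will check that $w_0 \in \mathcal{W}_0$ and $w_1 \in \mathcal{W}_1$. This is immediate from $f(w_0) \in A$ and $f(w_1) \in B$ and the definitions of $\mathcal{W}_0, \mathcal{W}_1$ as preimages. Applying $\Phi$ then gives $P_{w_0} = \Phi(w_0) \in \Phi(\mathcal{W}_0) = \mathcal{M}_0$ and $P_{w_1} = \Phi(w_1) \in \mathcal{M}_1$.

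Second, I will use the assumed observational equivalence $P_{w_0} = P_{w_1}$, that is, $w_0 \sim w_1$. It shows that this single distribution $P^\star$ lies in both images, so $P^\star \in \mathcal{M}_0 \cap \mathcal{M}_1$ and the intersection is nonempty. The disjointness of $A$ and $B$ is used only to guarantee that $\mathcal{W}_0 \cap \mathcal{W}_1 = \varnothing$. This makes clear that the overlap arises purely in distribution space, through non-injectivity of $\Phi$, and not from the parameter hypotheses themselves overlapping. It is not otherwise needed for the conclusion.

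Finally, I will apply Lemma~\ref{lem:overlap} to the pair $(\mathcal{M}_0, \mathcal{M}_1)$. This yields $\alpha_n(\phi_n) + \beta_n(\phi_n) \ge 1$ for every $n$ and every test $\phi_n$, so no sequence of tests has both errors tending to zero. In other words, no uniformly consistent test exists. There is no genuine obstacle here. The only point requiring care is to state that $\mathcal{M}_i$ are images under $\Phi$, so that membership of $P_{w_i}$ follows directly from membership of $w_i$. The general non-identifiability of $f$ is not used beyond the specific witness pair $(w_0, w_1)$ given in the hypothesis.
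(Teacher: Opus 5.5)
Your proposal is correct and follows the paper's proof exactly: the common law $P_{w_0} = P_{w_1}$ lies in both $\Phi(\mathcal{W}_0)$ and $\Phi(\mathcal{W}_1)$, and Lemma~\ref{lem:overlap} then gives $\alpha_n(\phi_n)+\beta_n(\phi_n)\ge 1$. Your side remarks are accurate but not needed: disjointness of $A,B$ only ensures $\mathcal{W}_0\cap\mathcal{W}_1=\varnothing$, and only the witness pair $(w_0,w_1)$ is used.
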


\begin{proof}
By assumption, $w_0 \in \mathcal{W}_0$ and
$w_1 \in \mathcal{W}_1$, while
$P_{w_0} = P_{w_1}$.
Hence $P_{w_0} \in \mathcal{M}_0 \cap \mathcal{M}_1$.
The conclusion follows from Lemma~\ref{lem:overlap}.
\end{proof}

This establishes that hypotheses depending on non-identifiable
parameter quantities are untestable.
We now turn to conditions under which observable regimes can be
separated in distribution space.

\section{Testability: Separation in Distribution Space}

\subsection{Hellinger distance and separated alternatives}
\label{sec:hellinger}

Assume that $\mathcal{M}$ is dominated by a common
$\sigma$-finite measure $\mu$.
For $P,Q \in \mathcal{M}$ with densities $p,q$
with respect to $\mu$, define the squared Hellinger distance
\[
h^2(P,Q)
=
\frac{1}{2}
\int
\left( \sqrt{p} - \sqrt{q} \right)^2
\, d\mu.
\]
We write $h(P,Q) = \sqrt{h^2(P,Q)}$.

\begin{definition}[$\varepsilon$-separated alternatives]
For $\varepsilon > 0$, define
\[
\mathcal{M}_1(\varepsilon)
=
\left\{
Q \in \mathcal{M}_1
:
\inf_{P \in \mathcal{M}_0}
h(P,Q)
\ge \varepsilon
\right\}.
\]
\end{definition}

\subsection{Existence of tests under separation}
\label{sec:separation-tests}

\begin{lemma}[Separated regimes admit tests]
\label{lem:separated}
If there exists $\varepsilon > 0$ such that
\[
\inf_{P \in \mathcal{M}_0,\; Q \in \mathcal{M}_1}
h(P,Q)
\ge \varepsilon,
\]
then there exists a sequence of tests $\phi_n$ such that
\[
\alpha_n(\phi_n)
\le
e^{-c n \varepsilon^2},
\qquad
\beta_n(\phi_n)
\le
e^{-c n \varepsilon^2}
\]
for some constant $c>0$.
\end{lemma}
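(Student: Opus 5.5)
The plan is to reduce the statement to the classical Le Cam--Birgé construction of tests via the Hellinger affinity
\[
\rho(P,Q)=\int\sqrt{pq}\,d\mu = 1-h^2(P,Q),
\]
which is the identity that holds under the normalization of $h^2$ adopted in Section~\ref{sec:hellinger}. I will establish the bound first for a single pair and then pass to the composite hypotheses $\mathcal{M}_0,\mathcal{M}_1$.

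\textbf{Step 1 (simple hypotheses).} For a fixed pair $P,Q$, take the likelihood ratio test $\phi_n=\mathbf{1}\{\prod_i q(X_i)>\prod_i p(X_i)\}$. Bounding each indicator by the square root of the likelihood ratio gives
\[
P^n(\phi_n=1)+Q^n(\phi_n=0)\le \int\sqrt{p^n q^n}\,d\mu^n=\rho(P,Q)^n .
\]
Combining this with $\rho=1-h^2\le e^{-h^2}$ yields errors at most $e^{-n\varepsilon^2}$ whenever $h(P,Q)\ge\varepsilon$.

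\textbf{Step 2 (composite hypotheses).} I would invoke the minimax theorem for tests (Le Cam). Since the test space is weak-$*$ compact and convex, it gives
\[
\inf_{\phi}\Bigl(\sup_{P\in\mathcal{M}_0}P^n\phi+\sup_{Q\in\mathcal{M}_1}Q^n(1-\phi)\Bigr)\le \sup \rho(\bar P_n,\bar Q_n),
\]
where the supremum runs over $\bar P_n\in\mathrm{conv}\{P^n:P\in\mathcal{M}_0\}$ and $\bar Q_n\in\mathrm{conv}\{Q^n:Q\in\mathcal{M}_1\}$. I would then use the standard lemma that
\[
\rho\bigl(\mathrm{conv}\{P^n\},\mathrm{conv}\{Q^n\}\bigr)\le \rho\bigl(\mathrm{conv}\,\mathcal{M}_0,\mathrm{conv}\,\mathcal{M}_1\bigr)^n.
\]
This lemma is proved by induction on $n$, conditioning on the first $n-1$ coordinates, since affinities of mixtures of products are submultiplicative in this sense. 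If the separation $h\ge\varepsilon$ holds between the convex hulls, the bound $e^{-n\varepsilon^2}$ follows with $c=1$ for both error types, since each error is bounded by their sum.

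\textbf{Main obstacle.} The hypothesis as stated only separates $\mathcal{M}_0$ from $\mathcal{M}_1$ pointwise. Hellinger separation need not survive passage to convex hulls: mixtures of far-apart alternatives can approach the null. So the key step is to justify separation of the hulls, and I would handle this in one of two ways.

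\emph{Convex case.} If $\mathcal{M}_0$ and $\mathcal{M}_1$ are convex, which is natural for regimes defined by linear or convex constraints on observables, the hulls are the sets themselves and there is nothing to prove.

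\emph{Totally bounded case.} Otherwise I would impose, or read into the statement, total boundedness. Cover $\mathcal{M}_1$ by $N=N(\varepsilon/2,\mathcal{M}_1,h)$ Hellinger balls of radius $\varepsilon/2$. Each such ball is convex in the density sense and, by the triangle inequality, lies at Hellinger distance at least $\varepsilon/2$ from $\mathcal{M}_0$. Apply Step~2 to the pair $\mathcal{M}_0$ versus each ball $B_j$ to get a test $\phi_{n,j}$, and set $\phi_n=\max_j\phi_{n,j}$. This gives $\alpha_n\le N e^{-n\varepsilon^2/4}$ and $\beta_n\le e^{-n\varepsilon^2/4}$. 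For $n\gtrsim \log N/\varepsilon^2$ the factor $N$ is absorbed, giving the claimed form with, e.g., $c=1/8$.

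This convexity or entropy requirement is where I expect the real content to lie. I would state it explicitly as the standing condition under which Lemma~\ref{lem:separated} is understood.
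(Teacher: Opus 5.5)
Your diagnosis is correct, and it is sharper than the paper's own argument, which is only a citation to Le Cam and van der Vaart. The classical results behind that citation carry exactly the convexity or entropy hypotheses you point to. Without them the lemma as written is false.

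For example, let $U$ be uniform on $[0,1]$, let $r_k(x)=\operatorname{sign}(\sin(2^k\pi x))$ be the Rademacher functions, and fix $0<\gamma<1$. Set $\mathcal{M}_0=\{U\}$ and $\mathcal{M}_1=\{Q_k:k\ge 1\}$ with $dQ_k/dU=1+\gamma r_k$.
\begin{itemize}
\item Each alternative is separated from the null: $h^2(U,Q_k)=1-\tfrac12(\sqrt{1+\gamma}+\sqrt{1-\gamma})>0$ for every $k$.
\item Mixtures of alternatives approach the null: with $\bar Q_K=K^{-1}\sum_{k\le K}Q_k^n$, orthonormality of the $r_k$ gives $\chi^2(\bar Q_K,U^n)=((1+\gamma^2)^n-1)/K$.
\item Hence every test satisfies $\alpha_n+\beta_n\ge 1-\tfrac12\sqrt{((1+\gamma^2)^n-1)/K}$ for all $K$, so $\alpha_n+\beta_n\ge 1$ for every $n$.
\end{itemize}
Your Steps 1--2 and your convex case are sound. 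One small point: the minimax test is randomized. Thresholding it at $1/2$ to get a $\{0,1\}$-valued test, as the paper's definition of a test requires, doubles both errors. That only costs a constant for large $n$.

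The genuine gap is in your totally bounded remedy, which reintroduces the obstacle you identified, now on the null side.
\begin{itemize}
\item Applying Step 2 to $\mathcal{M}_0$ versus a ball $B_j$ requires $h(\mathrm{conv}\,\mathcal{M}_0,B_j)\ge\varepsilon/2$.
\item The triangle inequality only gives $h(\mathcal{M}_0,B_j)\ge\varepsilon/2$, and mixtures of null distributions can enter $B_j$.
\item Swap the roles in the example above: take $\mathcal{M}_0=\{Q_k\}$ and $\mathcal{M}_1=\{U\}$. Then $N=1$, so your argument would produce tests with $\alpha_n,\beta_n\le e^{-n\varepsilon^2/4}$. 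In fact $\alpha_n+\beta_n\ge 1$ for every test.
\end{itemize}

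There are two repairs. The first is to assume $\mathcal{M}_0$ convex, for instance the singleton $\{P_0\}$ of the paper's testing assumption; that is the Ghosal--Ghosh--van der Vaart setting your argument reproduces. The second is to cover both sides, as follows.
\begin{itemize}
\item Take $\varepsilon/4$-nets $\{P_i\}_{i\le N_0}\subset\mathcal{M}_0$ and $\{Q_j\}_{j\le N_1}\subset\mathcal{M}_1$, with balls $A_i$ and $B_j$.
\item These balls are convex and pairwise at Hellinger distance at least $\varepsilon/2$, so Step 2 gives tests $\phi_{n,ij}$ for $A_i$ versus $B_j$.
\item Set $\phi_n=\max_j\min_i\phi_{n,ij}$. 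This yields $\alpha_n\le N_1e^{-n\varepsilon^2/4}$ and $\beta_n\le N_0e^{-n\varepsilon^2/4}$.
\end{itemize}
So the lemma holds for $n\gtrsim\log(N_0N_1)/\varepsilon^2$ under two-sided total boundedness.

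Note also that \emph{convex} here means closed under mixing of distributions. Regimes defined by nonlinear observables are not convex in this sense; examples are the paper's single Gaussian versus separated mixture, and the rank of $B$. For the paper's own examples, it is therefore the two-sided entropy version, with parameters restricted to compact sets, that is relevant.
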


\begin{proof}[Proof sketch]
This follows from classical results on testing separated hypotheses
under Hellinger distance (see, e.g., \cite{LeCam1986,vdVaart1998}).
.
\end{proof}

Lemma~\ref{lem:separated} shows that separation in distribution space
is sufficient for the existence of uniformly consistent tests.
Combined with Lemma~\ref{lem:overlap}, this establishes a sharp
dichotomy: overlap implies impossibility, while positive separation
implies testability.

The following proposition makes explicit that identifiable observables
in singular models fall within the scope of classical testing theory.

\begin{proposition}[Identifiable observable hypotheses reduce to classical testing theory]
\label{prop:identifiable-testable}
Let $f : \mathcal{W} \to \mathbb{R}^k$ be an identifiable observable,
with associated functional $T : \mathcal{M} \to \mathbb{R}^k$.
Suppose $A,B \subset \mathbb{R}^k$ are disjoint and satisfy
\[
\inf_{P \in \mathcal{M}_0,\; Q \in \mathcal{M}_1}
h(P,Q)
> 0,
\]
where
\[
\mathcal{M}_0 = \{ P : T(P) \in A \},
\qquad
\mathcal{M}_1 = \{ P : T(P) \in B \}.
\]
Then the hypotheses
\[
H_0 : f(w) \in A,
\qquad
H_1 : f(w) \in B
\]
admit uniformly consistent tests with exponentially small error.
\end{proposition}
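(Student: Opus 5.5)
The plan is to reduce the parameter-level hypotheses to the distribution-level regimes $\mathcal{M}_0,\mathcal{M}_1$ and then invoke Lemma~\ref{lem:separated}.

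\textbf{Step 1: parameter sets map onto the regimes.} Set $\mathcal{W}_0 = \{w : f(w)\in A\}$ and $\mathcal{W}_1=\{w : f(w)\in B\}$. Since $f$ is identifiable, $f(w)=T(P_w)$ for every $w$. Hence $w\in\mathcal{W}_i$ if and only if $P_w\in\mathcal{M}_i$, which gives $\Phi(\mathcal{W}_i)=\mathcal{M}_i\cap\Phi(\mathcal{W})$. Because $\Phi$ maps onto $\mathcal{M}$, this equals $\mathcal{M}_i$. In particular, the two parameter hypotheses and the two distributional hypotheses describe exactly the same sets of data-generating laws. Disjointness of $A$ and $B$ then gives $\mathcal{M}_0\cap\mathcal{M}_1=\varnothing$, so the obstruction of Lemma~\ref{lem:overlap} does not arise. (This is only a consistency check; the separation assumption already implies it.)

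\textbf{Step 2: the error criteria agree.} A test sees data only through $P_w^n$. For any test $\phi_n$,
\[
\sup_{w\in\mathcal{W}_0}P_w^n(\phi_n=1)=\sup_{P\in\mathcal{M}_0}P^n(\phi_n=1)=\alpha_n(\phi_n),
\]
and the analogous identity holds for the Type~II error over $\mathcal{W}_1$ and $\mathcal{M}_1$. So uniform consistency over parameters is equivalent to uniform consistency over the regimes.

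\textbf{Step 3: apply the separation lemma.} Let $\varepsilon:=\inf_{P\in\mathcal{M}_0,Q\in\mathcal{M}_1}h(P,Q)>0$. Lemma~\ref{lem:separated} yields tests $\phi_n$ with $\alpha_n(\phi_n)$ and $\beta_n(\phi_n)$ both at most $e^{-cn\varepsilon^2}$. By Step 2 these are exponentially small uniformly over $\mathcal{W}_0$ and $\mathcal{W}_1$, which is the claim.

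\textbf{Main obstacle.} The substance lies inside Lemma~\ref{lem:separated}, which is only sketched. If it needed justification, I would first try the Le Cam--Birgé construction. For a single pair $(P,Q)$, the likelihood-ratio test $\mathbf{1}\{\prod_i q/p>1\}$ has both errors bounded by $\rho(P,Q)^n=(1-h^2)^n\le e^{-nh^2}$, where $\rho$ denotes the Hellinger affinity. For composite hypotheses this uses convexity. The minimax theorem gives a test whose errors are controlled by $\sup_{P\in\mathrm{conv}\,\mathcal{M}_0^n,\;Q\in\mathrm{conv}\,\mathcal{M}_1^n}\rho(P,Q)$, and Hellinger separation of $\mathcal{M}_0$ and $\mathcal{M}_1$ passes to their convex hulls because the affinity is concave.

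The delicate point is that this convex-hull argument is clean only for convex sets, or after a covering. In general, one covers $\mathcal{M}_1$ by Hellinger balls of radius $\varepsilon/2$, each of which stays separated from $\mathcal{M}_0$, and takes the maximum of the ball-wise tests. That costs a factor equal to the covering number $N$ in the Type~I error, giving bounds of the form $Ne^{-cn\varepsilon^2}$. The pure exponential form in the statement therefore holds, with an adjusted constant $c$, under a finite-entropy condition or when the regimes are convex. I would note this caveat rather than claim full generality.
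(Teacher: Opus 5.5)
Your Steps 1--3 are the paper's proof made explicit: the paper's proof is only the remark that the claim ``follows immediately from Lemma~\ref{lem:separated}, since identifiable observables induce well-defined disjoint regimes in $\mathcal{M}$.'' Your identification $\Phi(\mathcal{W}_i)=\mathcal{M}_i$ and the matching of worst-case errors spell that remark out. Only the inequality $\sup_{w\in\mathcal{W}_0}P_w^n(\phi_n=1)\le\alpha_n(\phi_n)$ and its Type~II analogue are needed, so surjectivity of $\Phi$ is inessential.

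Your caveat is right, and it is a real falsity rather than a limitation of technique. Without convexity or an entropy condition, Lemma~\ref{lem:separated}, and with it this proposition, is false. Take $\mathcal{X}=[0,1]$ and $\mathcal{M}_0=\{U\}$ with $U$ uniform. Let $\mathcal{M}_1=\{Q_{m,S}\}$, where $m\ge 1$, $S\subset\{1,\dots,2m\}$, $|S|=m$, and $Q_{m,S}$ has density $2$ on the union of the bins $[(j-1)/(2m),j/(2m))$, $j\in S$. Then $h^2(U,Q_{m,S})=1-1/\sqrt2$ for all $(m,S)$. But the average $\bar Q_m^n$ of $Q_{m,S}^n$ over $S$ has density $2^n\prod_{i=0}^{n-1}\frac{m-i}{2m-i}\to 1$ at points whose coordinates occupy distinct bins. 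The remaining set has $U^n$-probability at most $\binom{n}{2}/(2m)$, so $\|\bar Q_m^n-U^n\|_{\mathrm{TV}}\to 0$ as $m\to\infty$. Since $\alpha_n(\phi_n)+\beta_n(\phi_n)\ge U^n(\phi_n=1)+\bar Q_m^n(\phi_n=0)\ge 1-\|\bar Q_m^n-U^n\|_{\mathrm{TV}}$, this forces $\alpha_n+\beta_n\ge 1$ for every $n$. Index these laws by $\mathcal{W}=\{0,1,2,\dots\}$ and set $T(P)=\mathbf{1}\{P\neq U\}$, $A=\{0\}$, $B=\{1\}$. This meets every hypothesis of the proposition.

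Your aside does contain one wrong sentence: the claim that Hellinger separation of $\mathcal{M}_0$ and $\mathcal{M}_1$ passes to their convex hulls because the affinity is concave. Concavity gives $\rho(\sum_i\lambda_iP_i,Q)\ge\sum_i\lambda_i\rho(P_i,Q)$, so mixing can only \emph{increase} affinity. For example, $\{\tfrac12(Q_1+Q_2)\}$ and $\{Q_1,Q_2\}$ with $Q_1\neq Q_2$ are separated, yet their hulls intersect. In the example above, $U$ is itself a mixture of the $Q_{m,S}$, and $U^n$ is a limit of mixtures of the $Q_{m,S}^n$.

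The Le Cam--Birg\'e bound $\rho(\operatorname{conv}\mathcal{M}_0^n,\operatorname{conv}\mathcal{M}_1^n)\le\rho(\operatorname{conv}\mathcal{M}_0,\operatorname{conv}\mathcal{M}_1)^n$ (with $\rho$ of two sets meaning the supremum over pairs) helps only when the hulls themselves are separated. That is why convex pieces, such as Hellinger balls (convex because $h^2(\cdot,Q)$ is convex), make the covering argument work. In that argument the null side must also be convex, e.g.\ a singleton as in the posterior-contraction literature, or be covered as well. Covering it puts a covering-number factor into the Type~II bound too.
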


\begin{proof}
This follows immediately from Lemma~\ref{lem:separated},
since identifiable observables induce well-defined
disjoint regimes in $\mathcal{M}$.
\end{proof}

Thus singularity does not preclude classical hypothesis testing.
What matters is whether the quantity of interest is identifiable
at the level of induced distributions.

\section{Bayesian Consequences: Contraction over Observable Regimes}

\subsection{Testing condition}
\label{sec:testing-condition}

Let $P_0 \in \mathcal{M}_0$ denote the true data-generating distribution. We study posterior concentration away from the alternative regime. We recall a standard testing-based condition for posterior contraction (see, e.g., \cite{Ghosal2000,vdVaart1998}).

\begin{assumption}[Existence of separating tests at scale $\varepsilon_n$]
\label{ass:testing}
There exists a sequence $\varepsilon_n \downarrow 0$ and tests
$\phi_n$ such that
\[
P_0^n(\phi_n = 1)
\le
e^{-c n \varepsilon_n^2},
\qquad
\sup_{Q \in \mathcal{M}_1(\varepsilon_n)}
Q^n(\phi_n = 0)
\le
e^{-c n \varepsilon_n^2}
\]
for some constant $c > 0$.
\end{assumption}
\subsection{Posterior contraction over observable regimes}
\label{sec:contraction}
The following result is a direct specialization of standard posterior contraction theory to observable regimes.
\begin{theorem}[Contraction over observable regimes]
\label{thm:contraction}
Suppose Assumption~\ref{ass:testing} holds and the prior $\Pi$
assigns sufficient mass to a Hellinger neighborhood of $P_0$, that is,
\[
\Pi\bigl(
P \in \mathcal{M}
:
h(P,P_0) \le \varepsilon_n
\bigr)
\ge
e^{-C n \varepsilon_n^2}
\]
for some constant $C>0$ and all sufficiently large $n$.
Then
\[
\Pi\bigl(
P \in \mathcal{M}_1(\varepsilon_n)
\mid X_1,\dots,X_n
\bigr)
\to 0
\]
in $P_0^n$-probability.
\end{theorem}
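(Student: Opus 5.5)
I will follow the standard Ghosal--Ghosh--van der Vaart argument, restricted to the set $U_n := \mathcal{M}_1(\varepsilon_n)$. Write the posterior mass as a ratio
\[
\Pi(U_n \mid X_1,\dots,X_n)
=
\frac{\int_{U_n} \prod_{i=1}^n \frac{q}{p_0}(X_i)\, d\Pi(Q)}
{\int_{\mathcal{M}} \prod_{i=1}^n \frac{p}{p_0}(X_i)\, d\Pi(P)}
=: \frac{N_n}{D_n},
\]
and split with the test $\phi_n$ of Assumption~\ref{ass:testing}:
\[
\Pi(U_n\mid X) \le \phi_n + (1-\phi_n)\frac{N_n}{D_n}.
\]
The first term satisfies $P_0^n\phi_n \le e^{-cn\varepsilon_n^2}\to 0$. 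For the numerator, Fubini gives
\[
P_0^n\bigl[(1-\phi_n)N_n\bigr] \le \int_{U_n} Q^n(1-\phi_n)\, d\Pi(Q) \le e^{-cn\varepsilon_n^2}.
\]
It remains to bound the denominator from below on an event of probability tending to one.

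The main obstacle is this denominator bound. The prior mass condition is stated for a \emph{Hellinger} ball, whereas the classical lemma (Ghosal et al., Lemma 8.1) uses a Kullback--Leibler neighborhood $\{K(p_0,p)\le\varepsilon_n^2,\ V(p_0,p)\le\varepsilon_n^2\}$. There it yields $D_n \ge \Pi(B_n)e^{-2n\varepsilon_n^2}$ with probability at least $1-1/(n\varepsilon_n^2)$.

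I would first try to use the Hellinger ball directly. The tool is the affinity bound $P_0^n\bigl[\prod_i \sqrt{p/p_0}(X_i)\bigr] = (1-h^2(P,P_0))^n$. Combined with Jensen on the normalized prior restricted to the ball $B_n$, this controls $\int_{B_n}\prod_i\sqrt{p/p_0}\,d\Pi/\Pi(B_n)$ in expectation. However, a lower bound in probability requires a second-moment or concentration argument. This is delicate without likelihood-ratio moment control.

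If this fails, I would state the result as relying on the standard additional assumption. Either the neighbourhood is read as a KL-type neighbourhood, or likelihood ratios are bounded, so that $h^2$ and $K$ are comparable up to logarithmic factors. This matches the theorem's description as "a direct specialization" and the citation of \cite{Ghosal2000}. Under that reading, $D_n \ge e^{-(C+2)n\varepsilon_n^2}$ with probability tending to one. I also need $n\varepsilon_n^2\to\infty$, which is implicit in such rate conditions and which I would make explicit.

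With the denominator bound in hand, on the good event
\[
P_0^n\Bigl[(1-\phi_n)\tfrac{N_n}{D_n}\mathbf 1\{\text{good}\}\Bigr] \le e^{(C+2)n\varepsilon_n^2}e^{-cn\varepsilon_n^2}.
\]
This tends to zero provided $c > C+2$. As in Ghosal et al., this is arranged by the test constant: tests from Lemma~\ref{lem:separated} can be taken with any exponent after enlarging $\varepsilon_n$ by a constant multiple $M$, so $U_n$ is effectively $\mathcal{M}_1(M\varepsilon_n)$. I would note this constant-matching explicitly as part of interpreting the theorem. Markov's inequality, combined with the three bounds, then gives $\Pi(U_n\mid X)\to 0$ in $P_0^n$-probability.
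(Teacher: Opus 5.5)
Your argument is the one the paper has in mind: its proof is only a pointer to the Ghosal--Ghosh--van der Vaart testing argument, and you carry that argument out. You split with the test, bound the numerator by Fubini as $P_0^n[(1-\phi_n)N_n]\le e^{-cn\varepsilon_n^2}$, and bound $D_n$ from below by prior mass. Under the hypotheses you add, the argument is correct.

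Your caveats are not artifacts of the technique: the theorem as literally stated is false, and the paper's one-line sketch conceals this. Take $P_\theta=N(\theta,1)$ with $\theta\ge 0$, $\mathcal{M}_0=\{P_0\}$, $\mathcal{M}_1=\{P_\theta:\theta>0\}$ and $\varepsilon_n=n^{-1/4}$. Use a prior with density proportional to $\theta^{-3}e^{-K/\theta^2}$, so that $\Pi(\theta\le x)\propto e^{-K/x^2}$. Since $h^2(P_\theta,P_0)=1-e^{-\theta^2/8}$, one has $\mathcal{M}_1(\varepsilon_n)=\{\theta\ge 2\sqrt2\,\varepsilon_n(1+o(1))\}$. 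The test $\mathbf{1}\{\bar X_n>\sqrt2\,\varepsilon_n\}$ verifies Assumption~\ref{ass:testing} with $c=1$, and the Hellinger-ball condition holds with any $C>K/8$. Yet in the variable $s=n^{1/4}\theta$ the log-posterior is $-\sqrt n\,(K/s^2+s^2/2)$ up to lower-order terms. So the posterior concentrates at $s=(2K)^{1/4}$, and for $K>32$ it puts mass tending to one on $\mathcal{M}_1(\varepsilon_n)$. Kullback--Leibler and Hellinger neighbourhoods are comparable in this model, so the failure comes purely from the unrelated constants $c$ and $C$.

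This has two consequences for your write-up. First, the margin you need (roughly $c>C+2$) cannot be ``arranged by the test constant'' inside Assumption~\ref{ass:testing}. Since $P_0^n(\phi_n=1)+Q^n(\phi_n=0)\ge\tfrac12(1-h^2(P_0,Q))^{2n}$, any $Q\in\mathcal{M}_1(\varepsilon_n)$ with $h(P_0,Q)=\varepsilon_n(1+o(1))$ forces $c\le 2+o(1)$. Passing to $M\varepsilon_n$ therefore changes both hypothesis and conclusion. You must assume tests of $P_0$ against $\mathcal{M}_1(M\varepsilon_n)$ with exponent of order $M^2 n\varepsilon_n^2$. Assumption~\ref{ass:testing} is single-scale, and Lemma~\ref{lem:separated} gives an unspecified $c$ with no uniformity in $\varepsilon$; for general non-convex alternatives it also needs convexity or entropy conditions. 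What you then prove is $\Pi(\mathcal{M}_1(M\varepsilon_n)\mid X_1,\dots,X_n)\to 0$ for large $M$.

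Second, your retreat from the Hellinger ball is forced rather than merely cautious, because Hellinger closeness does not control $\log(p_0/p)$. A density within $\varepsilon_n$ of $p_0$ can be arbitrarily small on a set of $P_0$-mass $\varepsilon_n^2$, and the sample hits that set with probability tending to one once $n\varepsilon_n^2\to\infty$. The affinity/Jensen computation therefore gives only an expectation bound. It cannot produce the in-probability lower bound on $D_n$ without a KL-type or bounded-likelihood-ratio condition.

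With these modifications and $n\varepsilon_n^2\to\infty$ stated explicitly, your proof is the standard one and is complete.
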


\begin{proof}[Proof sketch]
This is a direct specialization of standard posterior contraction
theorems based on testing conditions (see, e.g., \cite{Ghosal2000}). The only modification is that the hypotheses are defined
over subsets of $\mathcal{M}$ rather than over parameter space.
\end{proof}

\subsection{Failure of contraction under non-separation}
\label{sec:nonseparation}

\begin{proposition}[Non-separation obstructs testing and contraction]
\label{prop:nonseparation}
If
\[
\inf_{P \in \mathcal{M}_0,\; Q \in \mathcal{M}_1}
h(P,Q)
=
0,
\]
then no sequence of tests can satisfy
\[
\sup_{P \in \mathcal{M}_0} P^n(\phi_n=1) \to 0
\quad \text{and} \quad
\sup_{Q \in \mathcal{M}_1} Q^n(\phi_n=0) \to 0.
\]

Consequently, posterior contraction over $\mathcal{M}_1$
relative to $\mathcal{M}_0$ cannot occur uniformly at any fixed
positive separation scale.
\end{proposition}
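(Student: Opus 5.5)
The plan is to reduce the statement to a two-point lower bound and control the product measures through Hellinger affinity. Fix any sequence of tests $\phi_n$ and any $n$. For every pair $P\in\mathcal{M}_0$, $Q\in\mathcal{M}_1$ we have
\[
\alpha_n(\phi_n)+\beta_n(\phi_n)\ \ge\ P^n(\phi_n=1)+Q^n(\phi_n=0)\ \ge\ 1-\|P^n-Q^n\|_{TV}.
\]
This is the same mechanism as Lemma~\ref{lem:overlap}, with the exact coincidence $P=Q$ replaced by near-coincidence. Next, bound total variation by Hellinger distance and use the tensorization of the Hellinger affinity. With the normalization $h^2=\tfrac12\int(\sqrt p-\sqrt q)^2\,d\mu$, the affinity is $\int\sqrt{pq}\,d\mu=1-h^2(P,Q)$. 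Hence
\[
1-h^2(P^n,Q^n)=(1-h^2(P,Q))^n, \qquad \|P^n-Q^n\|_{TV}\le \sqrt2\,h(P^n,Q^n)\le \sqrt{2n}\,h(P,Q).
\]
Combining these gives $\alpha_n+\beta_n\ge 1-\sqrt{2n}\,h(P,Q)$.

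Since the infimum of $h$ over $\mathcal{M}_0\times\mathcal{M}_1$ is zero, for each fixed $n$ we can pick a pair $(P,Q)$ with $h(P,Q)\le \eta/\sqrt{2n}$. This shows $\alpha_n(\phi_n)+\beta_n(\phi_n)\ge 1-\eta$ for every $\eta>0$, so in fact $\alpha_n+\beta_n\ge 1$ for every $n$, exactly as in the overlap case. In particular, the two worst-case errors cannot both tend to zero, which is the first claim.

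The only real subtlety, and the step I expect to need care, is the order of quantifiers. The near-coincident pair must be chosen after $n$ is fixed, because for a fixed pair at positive distance the errors do vanish as $n\to\infty$. Non-separation is what allows the pair to be chosen at a distance $o(n^{-1/2})$ for each $n$. Any slack in the constant of the total-variation bound is harmless, since $\eta$ is arbitrary.

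For the consequence about contraction, I would argue contrapositively through the testing characterization behind Theorem~\ref{thm:contraction}. Suppose posterior mass on $\mathcal{M}_1$ vanished uniformly, that is, $\sup_{Q\in\mathcal{M}_1}E_Q\Pi(\mathcal{M}_1\mid X)\to0$ and $\inf_{P\in\mathcal{M}_0}E_P\Pi(\mathcal{M}_1\mid X)\to 1$ fails in the right direction. Then the plug-in test $\phi_n=\mathbf 1\{\Pi(\mathcal{M}_1\mid X_1,\dots,X_n)>1/2\}$ would, by Markov's inequality, have uniformly vanishing errors, which contradicts the first part. Any fixed scale $\varepsilon>0$ cannot capture the alternatives near $\mathcal{M}_0$, because $\mathcal{M}_1\setminus\mathcal{M}_1(\varepsilon)\neq\varnothing$ for every $\varepsilon>0$ when the infimum is zero. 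So uniform contraction at a fixed positive separation scale cannot cover all of $\mathcal{M}_1$.
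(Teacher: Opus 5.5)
Your proof of the testing claim is correct and follows the paper's route. The paper only cites ``classical lower bounds for testing under Hellinger distance'', and you carry that bound out. The two-point reduction, $\|P-Q\|_{TV}\le\sqrt2\,h(P,Q)$ under the paper's normalization, and $h^2(P^n,Q^n)=1-(1-h^2(P,Q))^n\le n\,h^2(P,Q)$ are all right. Choosing the pair after fixing $n$ then gives the stronger statement $\alpha_n(\phi_n)+\beta_n(\phi_n)\ge 1$ for every $n$. In other words, Lemma~\ref{lem:overlap} survives with exact overlap weakened to $\inf h=0$. Your explicit inequality $\alpha_n+\beta_n\ge 1-\sqrt{2n}\,h(P,Q)$ also makes the local-alternatives heuristic of Section~6 quantitative, which the paper's citation leaves implicit.

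For the contraction consequence your route differs from the paper's, and it is the sounder one. The paper says the claim ``follows from Theorem~\ref{thm:contraction}'', but that theorem only gives a sufficient condition for contraction and cannot by itself yield a negative conclusion. Your plug-in test plus Markov's inequality is the standard way to turn uniform posterior discrimination into a uniformly consistent test, contradicting the first part.

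As written, however, your displayed hypothesis has the roles of $P$ and $Q$ swapped, and the phrase ``fails in the right direction'' has no clear meaning. What you need is
\[
\sup_{P\in\mathcal M_0}E_P\,\Pi(\mathcal M_1\mid X_1,\dots,X_n)\to 0,
\qquad
\sup_{Q\in\mathcal M_1}E_Q\,\Pi(\mathcal M\setminus\mathcal M_1\mid X_1,\dots,X_n)\to 0,
\]
which gives $\alpha_n\le 2\sup_{P\in\mathcal M_0}E_P\,\Pi(\mathcal M_1\mid X_1,\dots,X_n)$ and $\beta_n\le 2\sup_{Q\in\mathcal M_1}E_Q\,\Pi(\mathcal M\setminus\mathcal M_1\mid X_1,\dots,X_n)$.

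Both conditions are genuinely needed. One-sided contraction under the null alone cannot be ruled out, since any prior with $\Pi(\mathcal M_1)=0$ achieves it trivially. So the proposition's loosely worded consequence should be read either in this two-sided sense or in your set-theoretic sense, $\mathcal M_1\setminus\mathcal M_1(\varepsilon)\neq\varnothing$ for every $\varepsilon>0$.
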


\begin{proof}
If the infimum of Hellinger distance is zero, then for every
$\varepsilon>0$ there exist $P \in \mathcal{M}_0$ and
$Q \in \mathcal{M}_1$ with $h(P,Q) < \varepsilon$.
By classical lower bounds for testing under Hellinger distance,
uniformly consistent tests do not exist in this case.
The contraction statement follows from Theorem~\ref{thm:contraction}.
\end{proof}

This motivates the scale-dependent analysis developed in Section~6.

\section{Scale Dependence Near Singular Boundaries}

\subsection{Local alternatives viewpoint}
\label{sec:local-alternatives}

In singular models, the null and alternative regimes may intersect
or approach one another along lower-dimensional strata of the model.
In such cases,
\[
\inf_{P \in \mathcal{M}_0,\; Q \in \mathcal{M}_1}
h(P,Q)
=
0,
\]
even though the regimes are distinct away from the singular boundary. This situation is analogous to classical local alternatives.
Although fixed positive separation may fail,
distributions at distance $\delta$ from the boundary may satisfy
\[
h(P,Q) \asymp \delta^a
\]
for some exponent $a > 0$ determined by the local geometry.
Detectability then depends jointly on the sample size $n$
and the distance $\delta$ to the singular stratum. In particular, if $n \delta^{2a} \to \infty$,
consistent testing may be possible,
whereas if $n \delta^{2a}$ remains bounded,
the null and alternative are effectively indistinguishable.

\subsection{Generic scale–detectability behavior}
\label{sec:scale-generic}

The preceding discussion can be expressed in a generic form under
mild regularity assumptions. Suppose, heuristically, that near a singular boundary
the Hellinger distance between regimes satisfies
\[
h(P_\theta, P_0)
\asymp
\mathrm{dist}(\theta, \mathcal{S})^{\,a},
\]
where $\mathcal{S}$ denotes a singular stratum and $a>0$
is determined by the local structure of the model. Consider a sequence of alternatives $\theta_n$
approaching the boundary at rate $\delta_n$.
Then
\[
h(P_{\theta_n}, P_0)
\asymp
\delta_n^{\,a}.
\]
Classical testing lower bounds imply that detectability depends on the
asymptotic behavior of
\[
n \, \delta_n^{2a}.
\]
If $n \delta_n^{2a} \to \infty$,
consistent testing is possible along the sequence.
If $n \delta_n^{2a}$ remains bounded,
the null and alternative are asymptotically indistinguishable. This illustrates that in singular models,
the feasibility of testing is inherently scale-dependent near
singular strata.

The exponent $a$ reflects the order at which distributional separation emerges when approaching a singular boundary. In many singular models, this order is governed by the algebraic or geometric structure of the parameterization. Determining $a$ in specific classes of models is a
nontrivial problem that requires detailed local analysis and may involve tools beyond classical regular asymptotics. We do not attempt such a derivation here; instead, we use the generic scaling form to illustrate how singular geometry influences detectability.

\section{Numerical Illustrations}
\label{sec:experiments}

\subsection{Experiment grid: identifiable vs non-identifiable}
\label{sec:experiment-grid}

The experiments are designed to illustrate the structural dichotomy
established in Sections 3 and 4.  For each of two singular models—
Gaussian mixture models (GMM) and reduced-rank regression (RRR)—
we consider both identifiable and non-identifiable hypotheses.

This yields four cases:

\begin{center}
\begin{tabular}{lcc}
\hline
 & Identifiable hypothesis & Non-identifiable hypothesis \\
\hline
Gaussian mixture model & Testable & Untestable \\
Reduced-rank regression & Testable & Untestable \\
\hline
\end{tabular}
\end{center}

The goal of these experiments is not to study power in detail,
but to demonstrate the qualitative phenomena predicted by the theory:
overlap leads to persistent indistinguishability, while separation
of identifiable observable regimes yields classical testing behavior.

\subsection{Gaussian mixture model: non-identifiable hypothesis}
\label{sec:gmm-nonid}

We consider a two-component Gaussian mixture model with equal variances.
The parameterization exhibits label symmetry: permuting component labels
leaves the induced distribution unchanged. Label symmetry in mixture models is classical
(see, e.g., \cite{Teicher1963}).

We test the parameter-level hypothesis
\[
H_0 : \mu_1 > \mu_2,
\qquad
H_1 : \mu_1 < \mu_2,
\]
where $(\mu_1,\mu_2)$ denote the component means.
This hypothesis depends on the ordering of mixture components and is
therefore non-identifiable under label permutations. Indeed, for any parameter value $(\mu_1,\mu_2)$,
the relabeled parameter $(\mu_2,\mu_1)$ induces the same distribution.
Consequently, the induced subsets of $\mathcal{M}$ overlap,
and Proposition~1 implies that uniformly consistent testing is impossible. We estimate empirical Type~I and Type~II errors across increasing sample
sizes. Figure~\ref{fig:gmm-nonid} reports the resulting error curves.
In all cases, the sum
\[
\widehat{\alpha}_n + \widehat{\beta}_n
\]
remains close to one, indicating persistent indistinguishability.
Increasing the sample size does not resolve the ambiguity,
consistent with Proposition~\ref{prop:nonid} and
Lemma~\ref{lem:overlap}. The empirical behavior mirrors the theoretical lower bound:
increasing the sample size does not reduce the combined error below one,
since the hypotheses induce overlapping distribution classes.

\begin{figure}[t]
\centering
\includegraphics[width=0.75\textwidth]{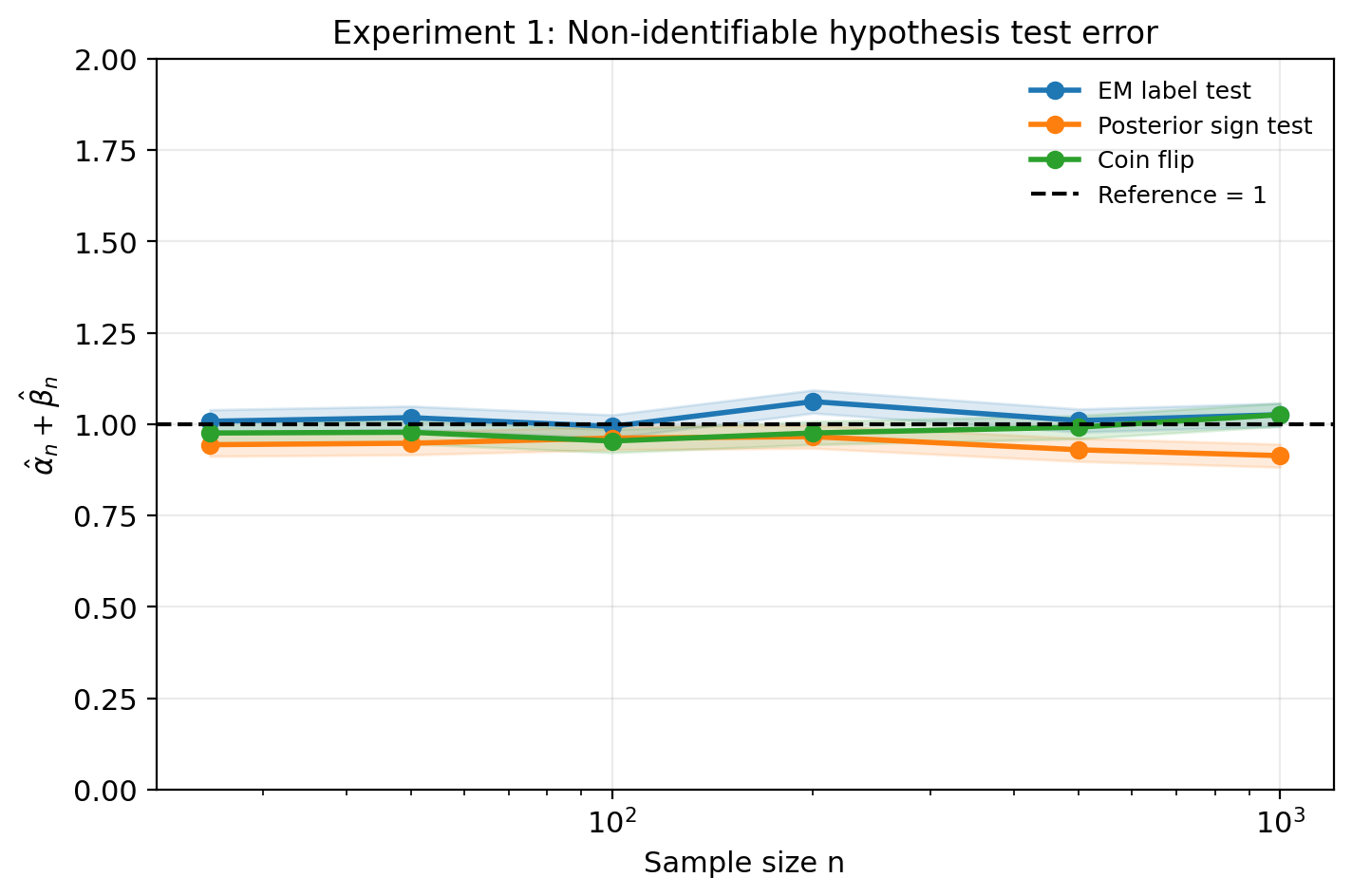}
\caption{
Empirical Type~I error $\widehat{\alpha}_n$,
Type~II error $\widehat{\beta}_n$,
and their sum for the non-identifiable
component-ordering hypothesis in a two-component
Gaussian mixture model.
Across increasing sample sizes,
$\widehat{\alpha}_n + \widehat{\beta}_n$
remains close to one, consistent with
Proposition~\ref{prop:nonid}. 
}
\label{fig:gmm-nonid}
\end{figure}

\subsection{Reduced-rank regression: identifiable rank hypothesis}
\label{sec:rrr-id}

We consider reduced-rank regression (RRR) with response dimension $q$
and predictor dimension $p$.
The model parameter is a matrix $B \in \mathbb{R}^{p \times q}$,
and the induced distribution depends only on the linear map
$x \mapsto Bx$.

We test the hypothesis
\[
H_0 : \operatorname{rank}(B) = r_0,
\qquad
H_1 : \operatorname{rank}(B) \ge r_0 + 1.
\]
The matrix rank is identifiable at the level of the induced
distribution: distinct ranks correspond to distinct covariance
structures in the regression model.
Thus the hypothesis is formulated in terms of an identifiable
observable. When the smallest nonzero singular value under the alternative
is bounded away from zero, the null and alternative regimes are
separated in Hellinger distance.
Proposition~\ref{prop:identifiable-testable} therefore implies that
uniformly consistent tests exist. Figure~\ref{fig:rrr-id} reports empirical Type~I error and power
across increasing sample sizes.
The size remains controlled near the nominal level,
while power increases with $n$,
illustrating classical separation behavior.

\begin{figure}[t]
\centering
\includegraphics[width=0.75\textwidth]{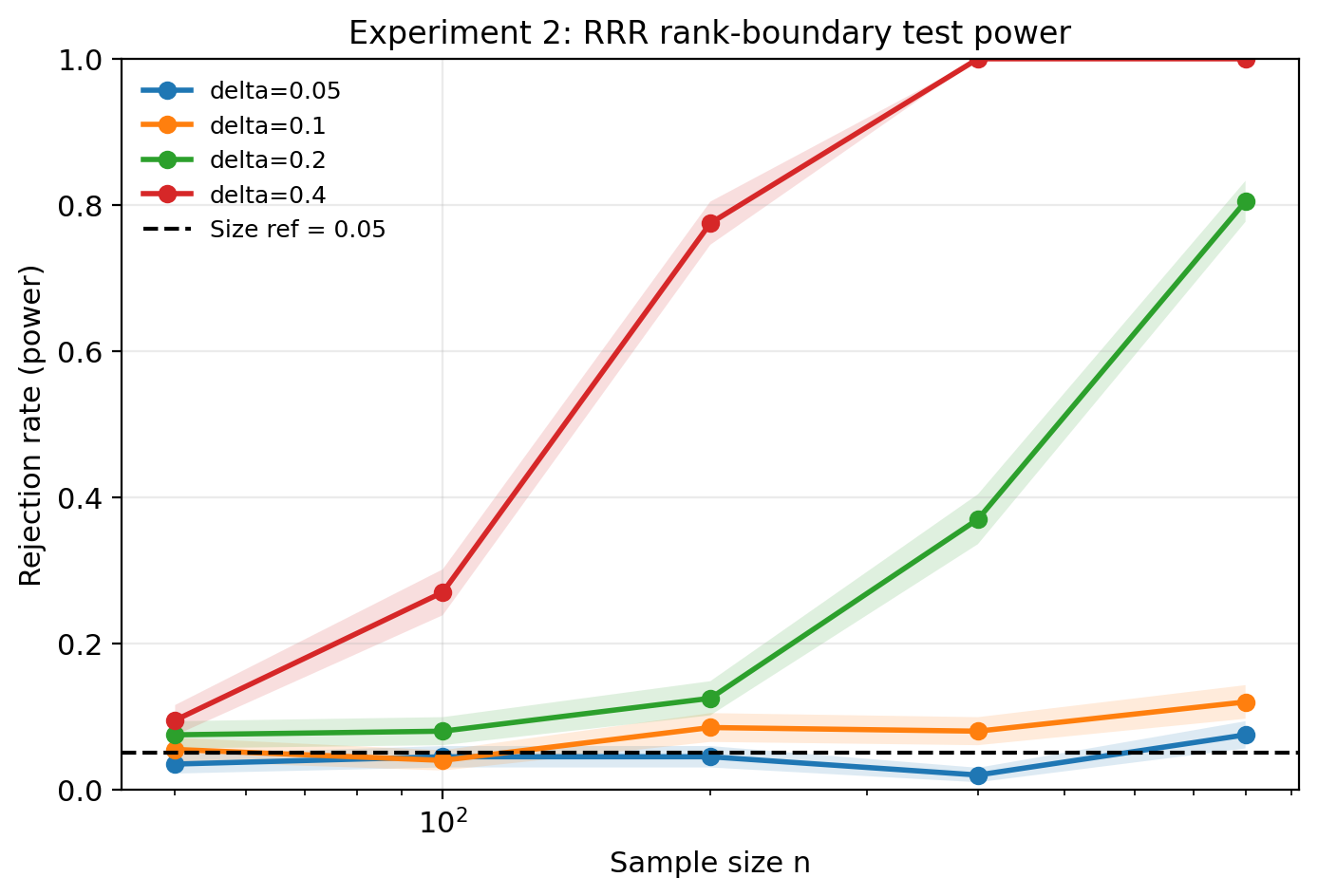}
\caption{
Empirical Type~I error and power for the identifiable
rank hypothesis in reduced-rank regression.
The smallest singular value under the alternative is
bounded away from zero.
Power increases with sample size,
consistent with classical separation theory.
}
\label{fig:rrr-id}
\end{figure}
 
\subsection{Gaussian mixture model: identifiable regime hypothesis}
\label{sec:gmm-id}

We return to the two-component Gaussian mixture model,
but now consider an identifiable observable regime.
Specifically, we test
\[
H_0 : P \text{ is a single Gaussian},
\qquad
H_1 : P \text{ is a two-component mixture with separated means}.
\]

The null corresponds to the regime in which the mixture
components coincide, while the alternative imposes a minimum
separation between component means.
These regimes are defined at the level of induced distributions,
and do not depend on component labeling. When the separation between component means is bounded away from zero,
the null and alternative are separated in Hellinger distance. Thus Proposition~\ref{prop:identifiable-testable} implies that
uniformly consistent tests exist. Figure~\ref{fig:gmm-id} reports empirical size and power as the
sample size increases.
The test maintains size under the null,
and power increases with $n$ when the mixture components are
sufficiently separated. This confirms that singularity alone does not prevent
classical hypothesis testing.

\begin{figure}[t]
\centering
\includegraphics[width=0.75\textwidth]{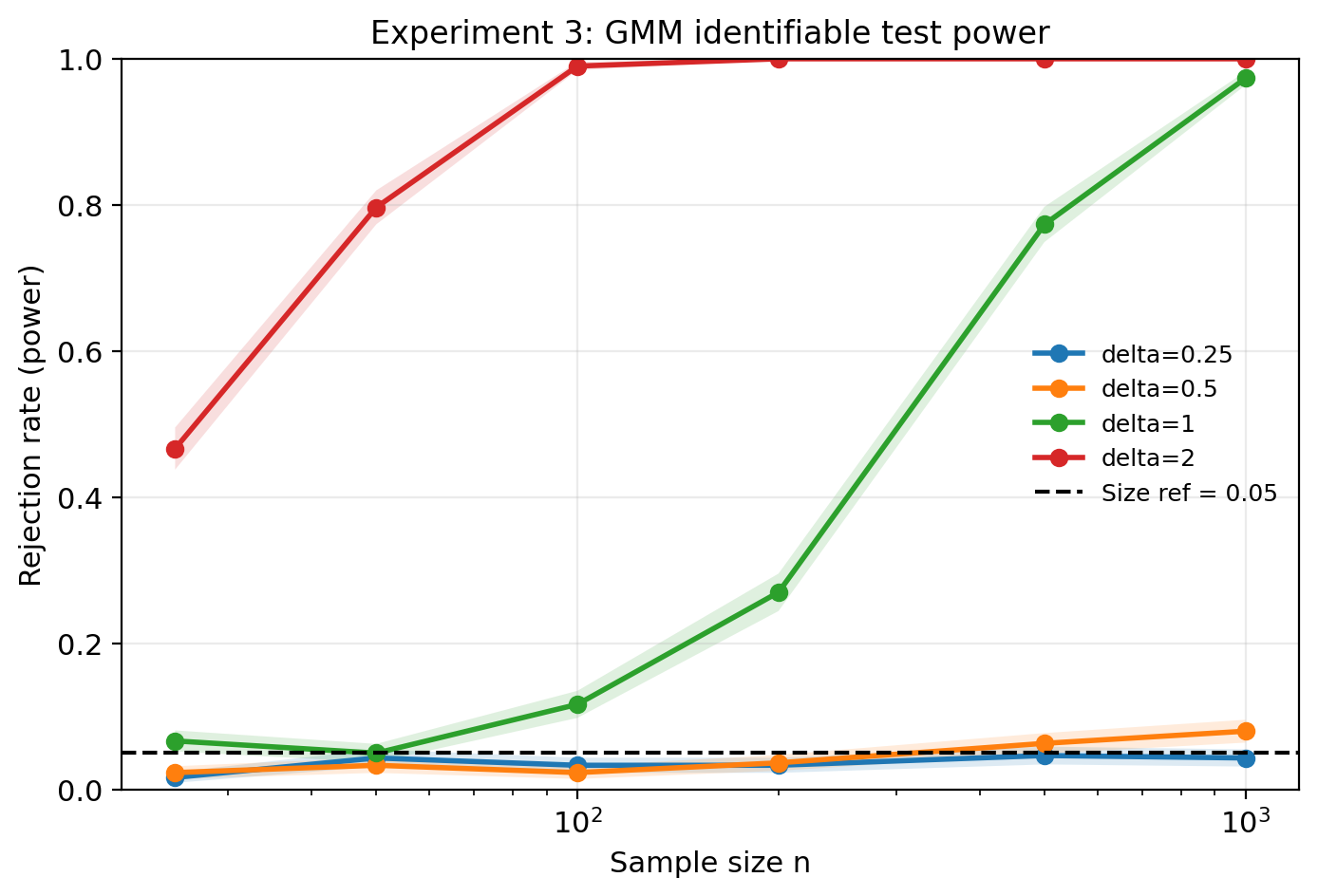}
\caption{
Empirical Type~I error and power for the identifiable
single-Gaussian versus separated-mixture hypothesis.
When component separation is bounded away from zero,
power increases with sample size,
illustrating classical testability in a singular model.
}
\label{fig:gmm-id}
\end{figure}

\subsection{Reduced-rank regression: non-identifiable hypothesis}
\label{sec:rrr-nonid}

We again consider reduced-rank regression,
but now test a parameter-level hypothesis that is
not identifiable at the level of induced distributions.

Let $B = U \Sigma V^\top$ denote a singular value decomposition.
We test
\[
H_0 : U_{1,1} > 0,
\qquad
H_1 : U_{1,1} < 0,
\]
where $U_{1,1}$ denotes the first entry of the left singular vector. This hypothesis depends on the sign of a singular vector,
which is not uniquely determined:
if $U$ is a singular vector matrix,
then so is $-U$,
and both induce the same regression distribution.
Thus the hypothesis depends on a non-identifiable quantity. The induced subsets of $\mathcal{M}$ therefore overlap,
and Proposition~\ref{prop:nonid} implies that uniformly
consistent testing is impossible. Figure~\ref{fig:rrr-nonid} reports empirical Type~I and Type~II
errors across increasing sample sizes.
As in the GMM non-identifiable case,
the sum
\[
\widehat{\alpha}_n + \widehat{\beta}_n
\]
remains close to one,
demonstrating persistent indistinguishability.

\begin{figure}[t]
\centering
\includegraphics[width=0.75\textwidth]{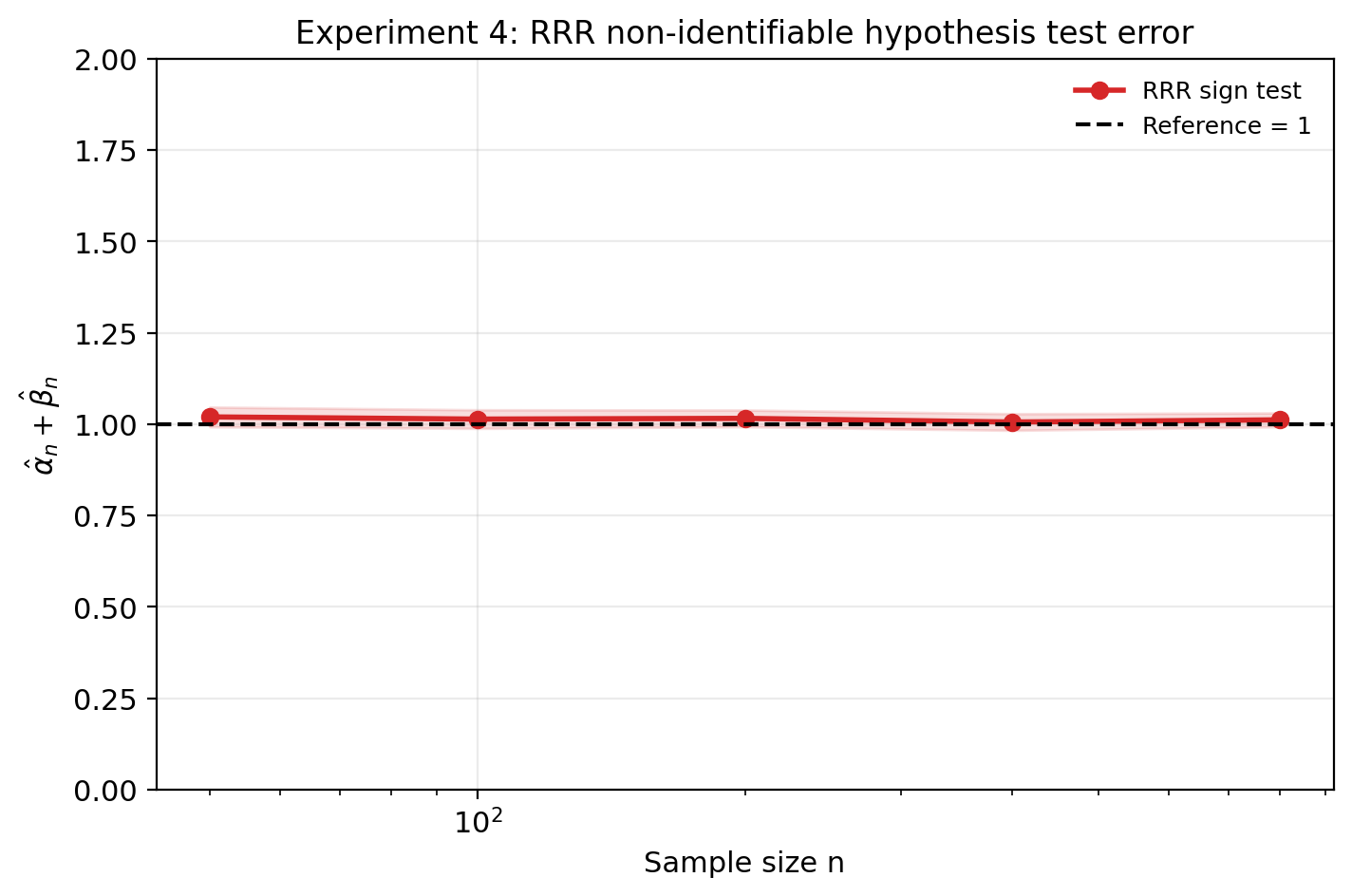}
\caption{
Empirical Type~I and Type~II errors for the
non-identifiable singular-vector sign hypothesis
in reduced-rank regression.
The sum of errors remains close to one
across increasing sample sizes,
illustrating the overlap-based impossibility result.
}
\label{fig:rrr-nonid}
\end{figure}

\section{Discussion and Conclusion}

Singular statistical models are often regarded as inherently problematic for inference. The results of this paper show that, for hypothesis testing,
the difficulty is not singularity itself,
but the formulation of the hypothesis. When a hypothesis depends on non-identifiable parameter quantities, the induced subsets of the model overlap in distribution space. In such cases, no uniformly consistent test exists, independently of sample size or estimation method. This obstruction is structural and cannot be resolved by improved algorithms or alternative asymptotic arguments.

In contrast, hypotheses formulated over identifiable observable regimes reduce entirely to classical testing theory. When the corresponding subsets of the model are separated in Hellinger distance, uniformly consistent tests exist and posterior contraction follows from standard arguments. Thus singularity does not preclude classical hypothesis testing;
what matters is whether the quantity of interest is identifiable at the level of induced distributions.

Near singular boundaries, separation may collapse locally, leading to scale-dependent detectability.
In such regimes, the feasibility of testing depends jointly on sample size and distance to the singular stratum. This behavior parallels classical local alternatives, but arises naturally from the geometry of singular models.

The numerical experiments illustrate the full structural classification. In both Gaussian mixture models and reduced-rank regression, non-identifiable parameter hypotheses remain untestable, while identifiable observable hypotheses exhibit classical
size control and increasing power.

The perspective developed here suggests a guiding principle for hypothesis formulation in singular models: testing should be posed in terms of identifiable observables, that is, quantities that factor through the induced distribution. Future work may explore sharper rate characterizations near singular strata and extensions to broader classes of models, but the central structural distinction is already clear.

\section*{Acknowledgments}
AI assistance statement: The author used a large language model to assist with editing, drafting, and organizational refinement of the manuscript. All mathematical arguments, proofs, experimental implementations, and interpretations are the author’s own work and were independently verified.

\bibliographystyle{plain}
\bibliography{refs}

\end{document}